\documentclass[11pt,a4paper]{article}
\usepackage[T1]{fontenc}
\usepackage[utf8]{inputenc}
\usepackage{lmodern}
\usepackage{microtype}
\usepackage{amsmath,amssymb,amsthm,mathtools}
\usepackage{geometry}
\usepackage{enumitem}
\usepackage[colorlinks=true,linkcolor=black,citecolor=black,urlcolor=blue]{hyperref}
\setlist[itemize]{leftmargin=*,topsep=2pt,itemsep=2pt}
\setlist[enumerate]{leftmargin=*,topsep=2pt,itemsep=2pt}
\allowdisplaybreaks
\newtheorem{theorem}{Theorem}[section]
\newtheorem{lemma}[theorem]{Lemma}
\newtheorem{proposition}[theorem]{Proposition}

\theoremstyle{definition}
\newtheorem{definition}[theorem]{Definition}
\theoremstyle{remark}

\newcommand{\calH}{\mathcal H}
\newcommand{\calD}{\mathcal D}
\newcommand{\eps}{\varepsilon}
\newcommand{\Win}{\operatorname{Win}}

\title{A Winner Classification for Rooted Partizan Edge Geography\\on Heterogeneous Bouquets of Cycles}
\author{Adam Til\`epe}
\date{25 October 2026}
\begin{document}
\maketitle
\begin{abstract}
We consider an undirected edge-traversal game on a finite bouquet of simple
cycles. Each of two players controls a token, both tokens begin at the common
root, and the players alternate traversing edges that have not been used by
either token. A player with no legal move loses. We prove that the first
player wins precisely when the number of cycles of odd edge-length is odd,
without assuming equal cycle lengths. The proof uses a simultaneous induction
on root and defense configurations, with an exact analysis of the forced race
between two opened cycles. We also explain how the classification corresponds
to a completed Lean~4 formalization of the underlying game on finite edge sets.
\end{abstract}

\noindent\textbf{Keywords.} Combinatorial games; partizan edge geography;
undirected graph games; bouquets of cycles; parity; formal verification.

\section{Introduction}

The two tokens in the game considered here are distinct: on their respective
turns, players move only their own token. The restriction on used edges,
however, is shared. This interaction makes the winner depend on more than the
individual routes taken by the two players.

Partizan geography with separate tokens controlled by the players was
studied by Fraenkel and Simonson~\cite{fraenkel1993}. Buchanan et
al.~\cite{buchanan2026} investigate Trail Trap, a related game in which
the players choose their initial edges during play rather than starting
from prescribed token positions. Their analysis gives a winner-determination
algorithm for trees and results for certain subclasses of complete
bipartite graphs and grid graphs, while also posing further questions about
this game on graph classes. These results motivate the search for explicit
winner classifications on structured families of graphs. More recently,
Okada~\cite{okada2026} established complexity results for undirected
partizan edge geography with specified token positions.

Here we determine the winner for a different, fixed initialization: both
tokens start at the same prescribed vertex, and the graph is a bouquet of
cycles meeting only at that vertex. Thus our classification applies to a
family containing cycles, with no restriction that the cycle lengths agree;
it does not claim a solution to the initial-edge-choice version of Trail
Trap or to winner determination on arbitrary graphs. Throughout, a cycle's
length means its \emph{number of edges}.

\begin{theorem}[Rooted bouquet classification]\label{thm:main}
Let $G$ be a finite bouquet of simple undirected cycles, each having at least
three edges, and let $r$ be its common vertex. Initially both tokens are at
$r$, no edge has been used, and the first player moves first. Then the first
player has a winning strategy if and only if the number of cycles of odd
edge-length is odd.
\end{theorem}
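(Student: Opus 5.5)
The plan is to prove a stronger statement by simultaneous induction on the number of unused edges. The statement covers two families of positions. A \emph{root configuration} has both tokens at $r$ and a multiset $S$ of untouched cycles. A \emph{defense configuration} has one token at $r$, about to move, while the opponent's token sits inside a cycle that it entered from $r$; that cycle has $k\ge 1$ unused edges remaining between the opponent's token and $r$, and $S$ is again the multiset of untouched cycles. The basic structural fact, which I would record first, is this: every non-root vertex has degree $2$. A token that has left $r$ along a cycle is therefore forced to continue around that cycle until it either returns to $r$ or finds its next edge already used. A token can only be blocked in this way if the other token entered the same cycle from the opposite end. Consequently every position reachable from the start is a root configuration, a defense configuration, or a \emph{race} in which both tokens are inside the same cycle.

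The first step is an exact analysis of races. Suppose the two tokens are inside one cycle, moving toward each other, with $g$ unused edges between them. They then alternate with no choices available until the gap closes. Whoever must move when $g=0$ has no legal move and loses. No token can escape back to $r$ during a race, because each token's path back to $r$ is already used. So the player to move loses the race iff $g$ is even, and the mover wins iff $g$ is odd. One has to be careful about the moment the mover enters the opponent's cycle from the other end: that entering move itself uses an edge. In the defense configuration this option gives the mover a race with gap $k-1$, which the opponent now faces, and the mover wins iff $k-1$ is even, i.e.\ iff $k$ is odd.

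The second step is the inductive statement. Let $o(S)$ be the number of odd cycles in $S$. The conjecture is that the mover at a root configuration wins iff $o(S)$ is odd, which is the theorem when $S$ is the whole bouquet. For a defense configuration the outcome should be a parity function of $k$ and $o(S)$; I would guess it and verify it by induction, starting from something like ``the mover wins iff $k$ is odd or $o(S)$ is odd''. The mover's options in a defense configuration are:
\begin{enumerate}
\item enter the opponent's cycle from the other end, which is decided by the race analysis;
\item enter an untouched cycle $C$ of length $L$.
\end{enumerate}
After option (2) both tokens are in separate cycles and run independently. The opponent arrives at $r$ after $k$ more moves and the mover after $L$ moves, so the token that reaches $r$ first, with the other still travelling, is again in a defense configuration with parameter $|k-L|$, roughly, and untouched set $S\setminus\{C\}$. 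If the two arrivals coincide, we are back at a root configuration. The off-by-one bookkeeping of who moves when must be done carefully here. For the root configuration the mover has only option (2), producing a defense configuration for the opponent with $k=L-1$.

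The main obstacle is this second step. One must pin down the exact defense invariant so that both directions close. The mover needs a good reply: in the winning case, choosing an odd cycle $C$ to restore even odd-count, or racing when $k$ is odd. Every reply must then fail in the losing case. I expect the difficulty to sit in the case where both tokens travel in disjoint cycles and $k$ and $L$ have mixed parities. There I would first try showing that $|k-L|$ has the parity of $k+L$, so that the parity invariant transforms additively. If the naive invariant fails in some boundary case such as $k=L$ or $k=1$, I would strengthen it by tracking separately whether a race option is available. The theorem then follows by specializing the root statement to the initial position.
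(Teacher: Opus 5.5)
You have the same architecture as the paper: root and defense configurations, the exact race through the opposite entrance, and a case split on which token returns to $r$ first. Your race analysis is correct, since in a defense configuration the opposite entrance wins iff $k$ is odd. The gap is the defense invariant. It carries the entire proof, you never determine it, and your starting guess ``the mover wins iff $k$ is odd or $o(S)$ is odd'' is false.

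Take the defense configuration with $k=4$ and $S$ consisting of two triangles. It is reached from the start on the bouquet of a pentagon and two triangles when the first player opens the pentagon. Both $k$ and $o(S)$ are even, yet the mover wins:
\begin{itemize}
\item The mover enters a triangle and is back at $r$ after three of its own moves, while the opponent still has two edges left.
\item After the opponent's forced step, the mover is at $r$ facing a defense configuration with parameter $1$.
\item Entering the opponent's cycle by its far root edge now leaves the opponent with no move.
\end{itemize}
In general, opening a cycle of length $L<k$ returns the mover to $r$ first, facing parameter $k-L$, and this is odd whenever $L$ is odd and $k$ is even. The correct invariant is the paper's Theorem~\ref{thm:HD}, translated via $k=A+1$. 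It has a third clause: the mover wins iff $k$ is odd, or $o(S)$ is odd, or $S$ contains an odd cycle of length less than $k$.

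This clause is not a boundary effect. Your fallback of tracking whether a race is available cannot produce it, because in a defense configuration the far root edge of the opponent's cycle is always unused.

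The clause also changes your root step. ``Choosing an odd cycle to restore even odd-count'' fails: in the example above, opening the pentagon loses. The mover must open an odd cycle of \emph{minimum} length, so that no shorter odd cycle remains for the opponent to exploit.

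The same minimality is what closes the sufficiency direction of the defense step. Suppose $k$ is even, $o(S)$ is odd, and no odd cycle is shorter than $k$; open a shortest odd cycle, of length $L$. If $L=k+1$, the opponent faces a root configuration with even odd-count. If $L\geq k+3$, the opponent faces a defense configuration with even parameter $L-k-1$ and no odd cycle shorter than it.

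Finally, the transitions are not uniformly $|k-L|$:
\begin{itemize}
\item for $L<k$ the mover faces parameter $k-L$;
\item for $L=k$ the mover faces a root configuration;
\item for $L=k+1$ the opponent faces a root configuration;
\item for $L\geq k+2$ the opponent faces parameter $L-k-1$.
\end{itemize}
So parity does not simply transform additively, and the necessity direction must be checked in each of these four cases against the three-clause invariant.
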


The argument below is a direct mathematical presentation of the winner
classification established in the accompanying Lean source. Its auxiliary
configurations expose the root/defense structure underlying the more general
corridor-and-petal model used by the formalization.

\section{Game states and bouquet notation}

Let $V$ be a vertex set and $E$ a finite set of unordered pairs of distinct
vertices. A state is specified by $s=(p_0,p_1,U;t)$, where $p_i$ is the position
of token $i$, $U\subseteq E$ is the set of used edges, and $t\in\{0,1\}$ is
the player whose turn it is. A move selects a neighbor $v$ of $p_t$ for which
$\{p_t,v\}\notin U$, sends $p_t$ to $v$, adjoins the traversed edge to $U$,
and switches the turn. Both tokens are allowed to occupy the same vertex.
The player without a legal move loses.

Every legal move increases $|U|$ by one. Thus play terminates after finitely
many moves. For any state $s$, write $\Win(s)$ when the player whose turn it
is can force a win. The elementary finite-game recursion is
\begin{equation}\label{eq:bellman}
 \Win(s)\quad\Longleftrightarrow\quad
 \exists\text{ a legal move }s\longrightarrow s'\text{ with }\neg\Win(s').
\end{equation}
The equivalent two-turn formulation is the inductive predicate
\texttt{edgeGameForceWin} in the Lean development: a player selects a legal
move and must survive every legal response, including the case where no
response exists.

Let $I$ be a finite index set of cycles. For each $k\in I$, fix an ordering
of the non-root vertices around its cycle,
\begin{equation}\label{eq:cycle}
 r,z_{k,0},z_{k,1},\ldots,z_{k,m_k-1},r,
 \qquad m_k\geq 2.
\end{equation}
The listed interior vertices are pairwise distinct, avoid $r$, and are
disjoint from the interior vertices of every other cycle. These are exactly
the structural requirements of \texttt{BouquetWellFormed} in Lean. The
cycle indexed by $k$ has $m_k+1$ edges, and therefore has odd edge-length
if and only if $m_k$ is even.

For $T\subseteq I$, put
\begin{equation}\label{eq:e}
 e(T)=\#\{k\in T:m_k\text{ is even}\},\qquad
 \eps_k=\begin{cases}1,&m_k\text{ even},\\0,&m_k\text{ odd}.
 \end{cases}
\end{equation}
Consequently $e(T\setminus\{k\})=e(T)-\eps_k$ whenever $k\in T$.

\section{Two elementary families of positions}

Our proof uses positions in which the unused edges form a bouquet, together
with a single partially traversed cycle. Completely exhausted cycles can be
deleted from the graph without affecting the legal continuation of play.
Interchanging the two player labels likewise preserves the identity of the
winner relative to the player whose turn it is.

\begin{definition}[Root and defense positions]\label{def:positions}
For a finite set of petals $T$, let $\calH(T)$ be a position in which both
tokens are at $r$, no edge of any petal in $T$ is used, and one player is
designated to move. We also write $\calH(T)$ for the assertion that the
player to move wins.

For $k\in T$ and $0\leq q<m_k$, let $\calD(T;k,q)$ be the position in which
the player to move has a token at $r$, the other token is at $z_{k,q}$, and
the used edges are exactly
\begin{equation}\label{eq:usedD}
 \bigl\{\{r,z_{k,0}\}\bigr\}
 \ \cup\ \bigl\{\{z_{k,i},z_{k,i+1}\}:0\leq i<q\bigr\}.
\end{equation}
All other edges of the bouquet on $T$, including the opposite root edge
$\{r,z_{k,m_k-1}\}$, remain unused. Write $\calD(T;k,q)$ for the assertion
that the player at $r$ can force a win.
\end{definition}

The defense coordinate is always measured from the consumed entrance in the
chosen orientation. In particular, the two choices of initial entrance
into a fresh cycle are isomorphic by reversing that cycle and fixing all
other cycles.

\begin{lemma}[First-opening equation]\label{lem:opening}
For every finite $T$,
\begin{equation}\label{eq:opening}
 \calH(T)\quad\Longleftrightarrow\quad
 \exists k\in T:\neg\calD(T;k,0).
\end{equation}
\end{lemma}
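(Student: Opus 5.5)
This is a one-step unfolding of the recursion \eqref{eq:bellman} at $\calH(T)$. We identify the legal moves from the root position and check that each successor state is, up to the symmetries noted in Section~3, a defense position $\calD(T;k,0)$.

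We start with the legal moves. In $\calH(T)$ both tokens are at $r$ and no edge of the bouquet on $T$ is used. The legal moves of the player to move are therefore exactly the traversals of a root edge. For each $k\in T$ these are $\{r,z_{k,0}\}$ and $\{r,z_{k,m_k-1}\}$, since the cycle \eqref{eq:cycle} has exactly these two edges at $r$. If $T=\emptyset$ there is no legal move, so $\calH(\emptyset)$ is false. The right-hand side of \eqref{eq:opening} is then an empty existential and is also false, so the equivalence holds in this case.

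Next we identify the successor state. Suppose the mover traverses $\{r,z_{k,0}\}$. In the resulting state the mover's token sits at $z_{k,0}$, and the used edge set is $\{\{r,z_{k,0}\}\}$, which is \eqref{eq:usedD} with $q=0$. The turn has passed to the other player, whose token is still at $r$. Interchanging the player labels (which the paper notes preserves the winner relative to the player to move), this is exactly $\calD(T;k,0)$. The player now to move is the one at $r$, so $\Win$ of the successor is precisely the assertion $\calD(T;k,0)$.

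Now suppose instead the mover traverses the opposite edge $\{r,z_{k,m_k-1}\}$. Reverse the orientation of cycle $k$, relabelling $z_{k,i}\mapsto z_{k,m_k-1-i}$, and fix all other cycles. This is a graph automorphism fixing $r$ and carrying used edges to used edges. It maps the successor state onto the $\calD(T;k,0)$ position, so the successor again has the winner value $\calD(T;k,0)$. This is the reversal isomorphism stated just before the lemma. The only care needed is to check that the automorphism preserves the well-formedness data and commutes with the move relation, and that $m_k\ge 2$ makes the two root edges distinct. It is routine, but it is the one step where an actual argument rather than an unfolding is required.

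Finally we combine the two cases with \eqref{eq:bellman}. $\calH(T)$ holds iff some legal move leads to a state $s'$ with $\neg\Win(s')$. By the two cases, every legal move into cycle $k$ leads to a state whose winner value is $\calD(T;k,0)$, and at least one such move exists for each $k\in T$. Hence $\calH(T)\iff\exists k\in T:\neg\calD(T;k,0)$, which is \eqref{eq:opening}.
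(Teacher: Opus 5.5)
Your proposal is correct and follows the paper's proof. You enumerate the root-edge moves, identify each successor with $\calD(T;k,0)$ (using the cycle reversal for the opposite entrance), and conclude by \eqref{eq:bellman}; your explicit treatment of $T=\varnothing$ and of the reversal automorphism just spells out details the paper leaves implicit.
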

\begin{proof}
From the double-root position, any legal move must take the moving token
from $r$ into one of the cycles. The opponent remains at $r$ and has the
next turn. After choosing an orientation that begins with the traversed
edge, this is precisely the defense position $\calD(T;k,0)$. The other
entrance to the same cycle gives an isomorphic state by reflection. The
claim follows from \eqref{eq:bellman}.
\end{proof}

\section{The forced race between two petals}

Fix $\calD(T;k,q)$ and define
\begin{equation}\label{eq:A}
 A=m_k-1-q\geq0.
\end{equation}
This is the number of unused internal edges before the token in petal $k$
reaches its last interior vertex.

\begin{lemma}[The opposite entrance]\label{lem:opposite}
In $\calD(T;k,q)$, entering $k$ by its opposite root edge is a winning move
for the player at the root if and only if $A$ is even.
\end{lemma}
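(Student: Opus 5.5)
The plan is to show that once the player at the root enters petal $k$ through its opposite root edge, the rest of the game is a forced parity count along a single path. I will call the player at $r$ in $\calD(T;k,q)$ the \emph{root player} $P$ and the other player $Q$.

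First I describe the position just after $P$ traverses $\{r,z_{k,m_k-1}\}$. By \eqref{eq:usedD}, both root edges of petal $k$ are now used, together with the edges $\{z_{k,i},z_{k,i+1}\}$ for $i<q$. So $P$ sits at $z_{k,m_k-1}$ and $Q$ sits at $z_{k,q}$. The unused edges of petal $k$ are exactly the path $z_{k,q},z_{k,q+1},\ldots,z_{k,m_k-1}$, which by \eqref{eq:A} has $A$ edges.

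Next I check that neither token can leave this path for the rest of the game. Every interior vertex of petal $k$ has degree two and touches only edges of that petal. The only edges linking petal $k$ to $r$ are its two root edges, and both are now used. Hence neither token can ever return to $r$, and the cycles of $T\setminus\{k\}$ are irrelevant from here on.

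I then prove an invariant by induction on the number of moves made after $P$'s entrance. The unused edges always form a subpath (possibly empty) whose two endpoints are the current positions of $Q$ and $P$. For each token, its unique unused incident edge, if there is one, is the end edge of that subpath at its side. The induction step is immediate, since a move traverses that end edge and shortens the subpath by one from that side. The degenerate case $q=m_k-1$, where $A=0$ and the tokens coincide, is covered as well.

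From the invariant, the player to move has a legal move exactly when the remaining subpath is nonempty. Play therefore lasts exactly $A$ further moves, with $Q$ moving first. After those $A$ moves, $Q$ is to move if $A$ is even and $P$ is to move if $A$ is odd, and that player has no edge and loses. So the opposite entrance wins for $P$ if and only if $A$ is even. This includes $A=0$, where $Q$ is stuck immediately.

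There is no genuine obstacle in this argument. The one point needing care is the confinement step, in particular confirming that $Q$'s backward edge (or its root edge when $q=0$) is already used, so that $Q$ has only the forward move. I will close by citing \eqref{eq:bellman} only to phrase ``winning move'' as ``$\neg\Win$ of the resulting state.''
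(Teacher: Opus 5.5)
Your proposal is correct and follows the paper's argument. After the opposite entrance, both root edges of petal $k$ are used, the tokens sit at the two ends of the remaining internal path of $A$ edges with the inside player to move, and forced alternation leaves the inside player stuck exactly when $A$ is even (including $A=0$); your subpath invariant and confinement check spell out what the paper states in a single line.
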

\begin{proof}
After this move both root-incident edges of $k$ are consumed. The two tokens
are trapped at opposite ends of the remaining internal path of $A$ unused
edges, and the token originally inside $k$ moves next. The players
alternately use edges from the two ends of this path. When $A$ is even, the
inside player is the first unable to move; when $A$ is odd, the root player
is the first unable to move. The case $A=0$ is included: the inside player
has no response at all.
\end{proof}

The root player may instead open a previously untouched petal $j\neq k$.
Choose its orientation so that the new token enters $z_{j,0}$ and put
$B=m_j-1$. Denote by $P$ the player already inside $k$, and by $Q$ the
player who has just entered $j$. The next turn belongs to $P$.

\begin{lemma}[Four outcomes of the two-petal race]\label{lem:race}
The move by $Q$ into $j$ is winning for $Q$ exactly when the predicate in
the corresponding row holds:
\begin{equation}\label{eq:four}
\begin{array}{r@{\qquad}l}
 A<B: & \neg\calD(T\setminus\{k\};j,A+1),\\[2pt]
 B+2\leq A: & \calD(T\setminus\{j\};k,q+B+2),\\[2pt]
 A=B: & \neg\calH(T\setminus\{j,k\}),\\[2pt]
 A=B+1: & \calH(T\setminus\{j,k\}).
\end{array}
\end{equation}
These four cases are mutually exclusive and exhaustive.
\end{lemma}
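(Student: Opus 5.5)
The plan is to show that, from the moment $Q$ enters petal $j$, play is completely forced until one of the tokens has returned to $r$ and the opponent has made one more move. At that point the position is, up to deleting exhausted cycles, one of the positions of Definition~\ref{def:positions}. The lemma then follows from \eqref{eq:bellman} by reading off whose turn it is in the resulting position.

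\emph{Forcing.} After the entry, $P$ sits at $z_{k,q}$. The edge behind it is used: either $\{r,z_{k,0}\}$ or $\{z_{k,q-1},z_{k,q}\}$. Its only possible move is therefore forward along the unused path $z_{k,q},\dots,z_{k,m_k-1},r$, which has $A$ internal edges followed by the opposite root edge. Likewise $Q$ sits at $z_{j,0}$, its entrance edge is consumed, and its only continuation is the path $z_{j,0},\dots,z_{j,m_j-1},r$ of $B$ internal edges followed by one root edge. The two paths are edge-disjoint, and disjoint from every other petal. Each move of one token therefore never removes an option of the other. So as long as neither token is at $r$, each player has exactly one legal move and no one is stuck.

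\emph{Timing.} Number the moves after $Q$'s entry as $1,2,3,\dots$, with $P$ moving at odd times and $Q$ at even times.
\begin{itemize}
\item $P$ reaches $r$ at time $2A+1$.
\item $Q$ reaches $r$ at time $2B+2$.
\end{itemize}
I then split into the four cases and track the position one move after the first arrival.
\begin{itemize}
\item $A<B$. $P$ arrives at time $2A+1$ and petal $k$ is exhausted. Then $Q$ (at $z_{j,A}$, with $A<B$) makes its forced move to $z_{j,A+1}$. Now $P$ is at $r$ to move, and the used edges in $j$ are exactly those of \eqref{eq:usedD}. This is $\calD(T\setminus\{k\};j,A+1)$ with $P$ to move, so $Q$ wins iff its negation holds.
\item $A=B$. $Q$'s move at time $2A+2$ is its root edge. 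Both tokens are at $r$ with $j,k$ exhausted and $P$ to move, which gives $\neg\calH(T\setminus\{j,k\})$.
\item $A=B+1$. $Q$ arrives at time $2B+2=2A$. Its turn then ends, so it cannot touch $k$'s opposite root edge. $P$ is forced home at time $2A+1$, leaving $Q$ to move from the double root: $\calH(T\setminus\{j,k\})$.
\item $B+2\le A$. $Q$ arrives at time $2B+2$, and $P$ then advances to $z_{k,q+B+2}$. The index is valid, since $q+B+2\le q+A=m_k-1<m_k$. Now $Q$ is at $r$ to move, with petal $j$ exhausted, which is $\calD(T\setminus\{j\};k,q+B+2)$.
\end{itemize}
Exclusivity and exhaustiveness are immediate, since the integers $A,B$ satisfy exactly one of $A<B$, $A=B$, $A=B+1$, $A\ge B+2$.

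The main obstacle is not the arithmetic but the bookkeeping. I must verify in each case that the resulting state really is the named position. That means checking three things:
\begin{itemize}
\item the exhausted petal has no unused edge left, so that deleting it is harmless (as remarked at the start of the previous section);
\item the surviving partially used petal has used edges exactly as in \eqref{eq:usedD}, with the correct orientation and, in particular, its opposite root edge still unused;
\item the player designated ``to move'' matches, with the player-label symmetry used to identify $P$ or $Q$ as the root player.
\end{itemize}
A small edge case to check separately is $A=0$ or $B=0$: a token may reach $r$ on its very first forced move. The timing formulas still apply there, but the ``$P$ moves first'' convention should be double-checked against Lemma~\ref{lem:opposite}.
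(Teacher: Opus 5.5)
Your proof is correct and follows essentially the same argument as the paper: forced advancement along the two edge-disjoint forward paths, then reading off the resulting root or defense position in each of the four cases, with your explicit move times $2A+1$ and $2B+2$ replacing the paper's ``$\min(A,B)$ pairs of forced moves.'' The edge case $B=0$ you flag cannot occur, since $m_j\ge 2$ gives $B\ge 1$, and the case $A=0$ is handled by your timing formulas without change.
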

\begin{proof}
While a token is inside its respective cycle, its incoming edge is already
used, so its next move is forced. The two forward paths are edge-disjoint.
After $n=\min(A,B)$ pairs of forced moves, the players are at interior
coordinates $q+n$ in $k$ and $n$ in $j$, and $P$ is to move.

If $A<B$, the next two moves are $P\to r$ and then one further interior
step by $Q$. The first cycle is now completely exhausted. It is $P$'s turn
at the root, in $\calD(T\setminus\{k\};j,A+1)$. Hence $Q$ wins exactly
when this defense position is losing.

If $B+2\leq A$, after the $B$ pairs of moves, $P$ advances one internal
step, $Q$ returns to the root, and $P$ advances one further internal step.
It is now $Q$'s turn at the root in
$\calD(T\setminus\{j\};k,q+B+2)$. Thus $Q$ wins precisely when that
defense position is winning.

If $A=B$, both tokens reach their respective last interior vertices at
the same time. The next two moves return first $P$ and then $Q$ to the
root, so the next player is $P$. Both petals are exhausted; the resulting
position is $\calH(T\setminus\{j,k\})$. This must be losing for $P$ if
$Q$ is to win.

Finally, if $A=B+1$, the remaining moves are an internal step by $P$,
the return of $Q$ to the root, and the return of $P$ to the root. The next
player is $Q$ in $\calH(T\setminus\{j,k\})$, which must therefore be
winning. The inequalities also ensure that the two defense coordinates
appearing above lie in their required ranges.
\end{proof}

\begin{proposition}[Defense recurrence]\label{prop:defense-recurrence}
With $A=m_k-1-q$, the position $\calD(T;k,q)$ is winning precisely when
$A$ is even or some $j\in T\setminus\{k\}$ gives a winning outcome in the
corresponding row of \eqref{eq:four}.
\end{proposition}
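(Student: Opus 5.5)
The plan is to apply the Bellman recursion \eqref{eq:bellman} to the position $\calD(T;k,q)$ and to classify every legal move of the root player, so that the proposition becomes a disjunction over the moves. The player to move has a token at $r$, so each legal move traverses an unused root-incident edge. I will first list these edges. In petal $k$ the edge $\{r,z_{k,0}\}$ is used by \eqref{eq:usedD}, so the only unused root edge of $k$ is the opposite one, $\{r,z_{k,m_k-1}\}$. Here I must check that this edge is really unused when $q=m_k-1$, i.e.\ $A=0$. It is, since \eqref{eq:usedD} contains only internal edges with index $i<q$ besides the entrance edge. In every petal $j\in T\setminus\{k\}$ both root edges are unused, because $\calD$ specifies that all other edges of the bouquet on $T$ are unused.

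The moves therefore fall into two classes. The first is the single move into $k$ by the opposite entrance. By Lemma~\ref{lem:opposite}, this move is winning exactly when $A$ is even. One point needs care: "winning move" in that lemma means the resulting state is losing for the opponent, which is precisely the condition $\neg\Win(s')$ in \eqref{eq:bellman}.

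The second class consists of the two moves into each petal $j\neq k$. These two moves are isomorphic by reversing the cycle $j$, as remarked after Definition~\ref{def:positions}. So it suffices to consider the orientation in which the token enters $z_{j,0}$, with $B=m_j-1$. For this move, Lemma~\ref{lem:race} states exactly when it wins for the mover $Q$: according to which of the four mutually exclusive and exhaustive rows of \eqref{eq:four} applies, the condition is the predicate in that row. The reversed orientation yields an isomorphic state, so it has the same outcome and adds no new possibility.

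Combining the two classes, \eqref{eq:bellman} says that $\calD(T;k,q)$ holds if and only if either $A$ is even, or for some $j\in T\setminus\{k\}$ the row of \eqref{eq:four} selected by $(A,B)$ holds. This is the stated recurrence. I expect the main obstacle to be bookkeeping rather than ideas, namely confirming that the case analysis is exhaustive. There is no pass move, and no move along an internal edge of $k$ is available to the root player, since that player's token sits at $r$. In addition, the positions arising in Lemma~\ref{lem:race} match Definition~\ref{def:positions} up to the relabeling of players and the deletion of exhausted petals. Both of these were justified in the preceding section, so I will only cite them.
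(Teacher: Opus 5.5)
Your proposal is correct and follows the paper's own proof. Like the paper, you list the unused root edges (the opposite entrance of $k$ and both entrances of each untouched $j\neq k$), classify the first by Lemma~\ref{lem:opposite} and the rest by Lemma~\ref{lem:race} plus reflection, and finish with \eqref{eq:bellman}; your extra checks (the opposite edge is still unused when $A=0$, implicitly using $m_k\geq 2$, and the root player has no internal or pass moves) only make explicit what the paper leaves implicit.
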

\begin{proof}
The only unused edges leaving the root are the opposite entrance of $k$
and the two entrances to each untouched petal $j\neq k$. The first case is
classified by Lemma~\ref{lem:opposite}, the others by
Lemma~\ref{lem:race} and reflection. Equation~\eqref{eq:bellman} completes
the argument.
\end{proof}

\section{Exact classification and proof of the main theorem}

\begin{theorem}[Root and defense formulas]\label{thm:HD}
For every finite $T$, every $k\in T$, and every $0\leq q<m_k$, put
$A=m_k-1-q$. Then
\begin{equation}\label{eq:defense-formula}
\boxed{\ \calD(T;k,q)\ \Longleftrightarrow\
 (A\text{ even})\ \lor\ (e(T\setminus\{k\})\text{ odd})\ \lor\
 \bigl(\exists j\in T\setminus\{k\}:m_j\text{ even},\ m_j<A\bigr).\ }
\end{equation}
Moreover,
\begin{equation}\label{eq:root-formula}
\boxed{\ \calH(T)\ \Longleftrightarrow\ e(T)\text{ is odd}.\ }
\end{equation}
\end{theorem}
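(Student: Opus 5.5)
The proof is a simultaneous strong induction on the total number of unused edges. Equivalently, we induct lexicographically on $|T|$ and then on $A$, since every transition in Lemma~\ref{lem:opening} and Lemma~\ref{lem:race} either removes a petal or strictly increases the defense coordinate $q$ within a smaller set. The formula \eqref{eq:root-formula} for $\calH(T)$ follows from \eqref{eq:defense-formula} on the same $T$ via Lemma~\ref{lem:opening}. The formula for $\calD(T;k,q)$ follows from Proposition~\ref{prop:defense-recurrence}, with the induction hypothesis substituted into the four rows of \eqref{eq:four}. The base case $T=\emptyset$ gives $\calH(\emptyset)$ false and $e(\emptyset)=0$.

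\emph{Root step.} By Lemma~\ref{lem:opening}, $\calH(T)$ holds iff some $k\in T$ has $\neg\calD(T;k,0)$. With $q=0$ we have $A=m_k-1$, so $A$ is even iff $m_k$ is odd.
\begin{itemize}
\item Suppose $e(T)$ is odd. Pick $k$ with $m_k$ even, so $A$ is odd and $e(T\setminus\{k\})=e(T)-1$ is even. The third disjunct would need an even $m_j<m_k-1$. So among the even-length petals we choose $k$ with minimal $m_k$. Then any even $m_j$ satisfies $m_j\ge m_k>A$, and $\calD(T;k,0)$ fails.
\item Suppose $e(T)$ is even. For $k$ with $m_k$ odd, $A$ is even, so $\calD$ holds. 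For $k$ with $m_k$ even, $e(T\setminus\{k\})$ is odd, so $\calD$ holds.
\end{itemize}
Either way \eqref{eq:root-formula} follows.

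\emph{Defense step.} By Proposition~\ref{prop:defense-recurrence} it suffices to assume $A$ odd and to show that some $j\neq k$ gives a winning row iff $e(T\setminus\{k\})$ is odd or there is an even $m_j<A$. Write $B=m_j-1$ and evaluate each row by the induction hypothesis.
\begin{itemize}
\item \emph{Row $A=B$.} Winning iff $e(T\setminus\{j,k\})$ is even. Here $m_j=A+1$ is even because $A$ is odd, so this means $e(T\setminus\{k\})$ is odd.
\item \emph{Row $A=B+1$.} Winning iff $e(T\setminus\{j,k\})$ is odd. Here $m_j=A$ is odd, so again this means $e(T\setminus\{k\})$ is odd.
\item \emph{Row $B+2\le A$.} This needs $\calD(T\setminus\{j\};k,q+B+2)$. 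The new $A'=A-B-2=A-m_j-1$ has the parity of $m_j$, since $A$ is odd.
  \begin{itemize}
  \item If $m_j$ is even, then $A'$ is even and the row wins; this covers $m_j<A$ with $m_j$ even, since $m_j\le A-2$ then forces $B+2\le A$.
  \item If $m_j$ is odd, the row wins iff $e(T\setminus\{j,k\})=e(T\setminus\{k\})$ is odd, or there is an even $m_{j'}<A'<A$. Both are already disjuncts of the target.
  \end{itemize}
\item \emph{Row $A<B$.} This needs $\neg\calD(T\setminus\{k\};j,A+1)$, whose new $A''=m_j-2-A$ has parity opposite to $m_j$. Winning requires $A''$ odd, i.e.\ $m_j$ even, together with $e(T\setminus\{j,k\})$ even, i.e.\ $e(T\setminus\{k\})$ odd. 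So whenever this row wins, $e(T\setminus\{k\})$ is odd and the target already holds.
\end{itemize}

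\emph{Completing the equivalence.} The rows above show that any winning $j$ implies the right-hand side. For the converse:
\begin{itemize}
\item If there is an even $m_j<A$, the row $B+2\le A$ wins.
\item If $e(T\setminus\{k\})$ is odd, we need some $j$ whose row wins. Try the even-length $j\neq k$ of minimal $m_j$.
  \begin{itemize}
  \item If $m_j<A$, the previous case applies.
  \item If $m_j=A+1$, the row $A=B$ wins.
  \item If $m_j>A+1$, we are in row $A<B$. Minimality of $m_j$ means no even $m_{j'}<m_j-2-A$ exists in $T\setminus\{j,k\}$, while $A''$ is odd and $e(T\setminus\{j,k\})$ is even. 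So $\neg\calD$ holds and the row wins.
  \end{itemize}
\end{itemize}
Note $m_j=A$ cannot occur for even $m_j$, since $A$ is odd.

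The main obstacle is this last bookkeeping. In the $A<B$ row, the induction hypothesis must be applied on $T\setminus\{k\}$ with the third disjunct ranging over $T\setminus\{j,k\}$, so the minimal choice of $j$ is essential. The range conditions $0\le q'<m$ for the recursive defense positions also need checking; these are guaranteed by Lemma~\ref{lem:race}. Finally, Theorem~\ref{thm:main} is the case $T=I$ of \eqref{eq:root-formula}, since $e(I)$ counts exactly the cycles of odd edge-length.
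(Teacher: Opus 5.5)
Your proposal is correct and follows the paper's proof essentially step for step. You prove the defense formula row by row via Lemma~\ref{lem:race}, with sufficiency coming from the even-length petal of minimal $m_j$, and then derive the root formula on the same $T$ via Lemma~\ref{lem:opening}, using the even $m_k$ of minimal length (your unused-edge or lexicographic measure is more than needed, since every child in Lemma~\ref{lem:race} lives on a proper subset, so the paper's strong induction on $|T|$ alone suffices). One harmless slip: for even $m_j<A$ with $A$ odd you only get $m_j\le A-1$, not $m_j\le A-2$, but $B+2=m_j+1\le A$ still follows directly.
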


\begin{proof}
We use simultaneous strong induction on $|T|$. For $T=\varnothing$ there
is no legal move from the double root and no defense position to consider.
Suppose that both assertions hold for every proper subset of $T$.

We first prove the defense formula for $T$ and any $k\in T$. By
Lemma~\ref{lem:opposite}, the defense is winning whenever $A$ is even.
Assume henceforth that $A$ is odd, and write
\[
 S=T\setminus\{k\},\qquad e_0=e(S).
\]
We claim that opening another petal is winning exactly if
\begin{equation}\label{eq:criterion}
 e_0\text{ is odd}
 \quad\lor\quad
 \exists j\in S:\ m_j\text{ is even and }m_j<A.
\end{equation}

\emph{Necessity.} Suppose an opening into $j\in S$ wins, and let
$B=m_j-1$. We apply Lemma~\ref{lem:race} and the induction hypothesis.

If $A<B$, the child defense position on $S$ has remaining internal gap
$h=m_j-A-2$. It must be losing. If $m_j$ were odd, then $h$ would be even
(since $A$ is odd), and the child would be winning by the first clause
of \eqref{eq:defense-formula}. Thus $m_j$ is even. The losing child must
also have an even number of other odd-length petals, namely
$e(S\setminus\{j\})=e_0-1$; consequently $e_0$ is odd.

If $B+2\leq A$ and $m_j$ is even, then
$m_j+1=B+2\leq A$, giving $m_j<A$. If instead $m_j$ is odd, the winning
child defense position has gap $h=A-m_j-1$, which is odd. Its number of
other odd-length petals is $e_0$. By induction, it can win only if $e_0$
is odd or some even $m_\ell$ with $\ell\in S\setminus\{j\}$ satisfies
$m_\ell<h<A$. Either possibility implies \eqref{eq:criterion}.

If $A=B$, we have $m_j=A+1$ even. The winning condition in the third row
is that $\calH(S\setminus\{j\})$ loses. Since
$e(S\setminus\{j\})=e_0-1$, the induction hypothesis forces $e_0$ to
be odd. If $A=B+1$, then $m_j=A$ is odd, and the winning root child has
$e(S\setminus\{j\})=e_0$ odd by induction. This proves necessity.

\emph{Sufficiency.} First suppose there is some $j\in S$ with $m_j$
even and $m_j<A$. Then $B+2=m_j+1\leq A$. The second row of
\eqref{eq:four} applies. Its defense child has remaining gap
$h=A-m_j-1$, which is even; hence this child is winning by induction, and
opening $j$ succeeds.

Otherwise suppose that $e_0$ is odd. Choose among the petals of $S$ with
even interior length one of minimum interior length, say $j$. If
$m_j<A$, we are in the preceding case. Since $A$ is odd and $m_j$ is
even, the only other possibilities are $m_j=A+1$ or $m_j\geq A+3$.
For $m_j=A+1$, the third row of \eqref{eq:four} applies, and the
root child has $e_0-1$ odd-length petals, an even number; it loses by
induction. For $m_j\geq A+3$, the first row applies. Its defense child
has gap $h=m_j-A-2$, which is odd, and it has $e_0-1$ other odd-length
petals, also an even number. By minimality of $m_j$, all those other
odd-length petals have interior length at least $m_j>h$. Therefore all
three disjuncts of the induction formula are false for the child. It
loses, so opening $j$ succeeds. This proves \eqref{eq:criterion}, and
therefore \eqref{eq:defense-formula} for $T$.

We now establish \eqref{eq:root-formula} for $T$ using
Lemma~\ref{lem:opening}. Write $e=e(T)$. If $e$ is even, consider any
$k\in T$. If $m_k$ is odd, the child gap $A=m_k-1$ is even and the
defense child wins. If $m_k$ is even, then
$e(T\setminus\{k\})=e-1$ is odd, and the defense child again wins.
Thus no opening is winning and $\calH(T)$ loses.

If $e$ is odd, choose a petal $k$ with even $m_k$ of minimum such
length. In $\calD(T;k,0)$ its gap $A=m_k-1$ is odd, while
$e(T\setminus\{k\})=e-1$ is even. Every other petal $j$ with even $m_j$
satisfies $m_j\geq m_k>A$, so the third clause of
\eqref{eq:defense-formula} is false as well. Therefore this defense child
loses. By Lemma~\ref{lem:opening}, $\calH(T)$ wins.
This closes the simultaneous induction.
\end{proof}

\begin{proof}[Proof of Theorem~\ref{thm:main}]
The initial game state on the bouquet is exactly the root position
$\calH(I)$. Theorem~\ref{thm:HD} therefore says that the first player
wins if and only if $e(I)$ is odd. By \eqref{eq:cycle} and
\eqref{eq:e}, $e(I)$ counts exactly the cycles of odd edge-length.
\end{proof}

\section{Correspondence with the Lean formalization}

The accompanying source file formalizes the game on an actual undirected
simple graph, rather than postulating the numerical recurrence used in the
paper proof. The object \texttt{State} records both positions, the used
undirected edges, and the turn. The predicate \texttt{Legal} and the
operation \texttt{advance} implement the legal move rule; the inductively
defined \texttt{edgeGameForceWin} implements the forced-win condition.

The construction \texttt{bouquetGraph} uses a finite union of cyclic edge
sets. The hypothesis \texttt{BouquetWellFormed} ensures precisely that the
interior lists are nondegenerate, repetition-free, disjoint from the root,
and pairwise disjoint. A petal with $m_k$ interior vertices has $m_k+1$
edges, so the source predicate \texttt{OddCycles} counts the same petals
as $e(I)$ in \eqref{eq:e}.

For the strategic classification, the formal proof develops a canonical
corridor-and-petal game. Its predicate \texttt{CanonicalFWins} refers to
a game position on that finite auxiliary graph, not to an assumed
arithmetic outcome. With $b\geq0$ the corridor budget, $M$ the remaining
petals, and $m_k$ their interior lengths, define
\begin{equation}\label{eq:F}
\begin{split}
 F(b,M)\quad\Longleftrightarrow\quad &b\equiv1\pmod2\\
 &\lor\ \#\{k\in M:m_k\equiv0\pmod2\}\equiv1\pmod2\\
 &\lor\ \exists k\in M:\ m_k\equiv0\pmod2\ \text{and }m_k+1<b.
\end{split}
\end{equation}
This is \texttt{canonicalOutcomeFormula} in the source. The formal
classification gives, for all admissible $b$ and $M$,
\begin{equation}\label{eq:canonical}
 \texttt{CanonicalFWins}(b,M,m,0)
 \quad\Longleftrightarrow\quad F(b,M).
\end{equation}
The positive-budget case is proved through the root and defense models:
\nolinkurl{twoActive_complete_four_cases} establishes the four race
boundaries; \nolinkurl{defense_model_universal} and
\nolinkurl{root_zero_parity_classification} establish the corresponding
arithmetic outcome models by induction on the number of fresh petals;
and \nolinkurl{terminal_root_classification} closes the corridor endpoint.
The result for every positive budget is
\nolinkurl{canonical_Fformula_positive_budget}. The zero-budget case,
\nolinkurl{wellformed_canonical_Fzero_formula}, follows from the exact
first-opening Bellman equation and the child classifications. Together,
these yield \nolinkurl{wellformed_Fformula_all_budgets}.

In particular, setting $b=0$ in \eqref{eq:F} removes the first and third
disjuncts:
\begin{equation}\label{eq:Fzero}
 F(0,I)\quad\Longleftrightarrow\quad e(I)\equiv1\pmod2.
\end{equation}
The reduction \nolinkurl{real_B_iff_canonical_Fzero} identifies the
forced-win predicate for the actual double-root bouquet with the
zero-budget canonical position; \texttt{fresh\_root\_all} identifies all
petals as initially unused. The final source theorem is
\begin{center}
 \texttt{TrailTrap.heterogeneous\_bouquet\_classification}.
\end{center}
It has precisely the hypothesis and conclusion of
Theorem~\ref{thm:main}, expressed using \texttt{bouquetGraph},
\texttt{freshRootState}, \texttt{edgeGameForceWin}, and \texttt{OddCycles}.
The Lean source therefore supplies a mechanically checked proof of the
main classification independently of the presentation of the induction
above. The accompanying project files fix the Lean toolchain and Mathlib
revision used to check the theorem.

\section{Concluding observation}

The outcome from the prescribed common-root position depends only on the
parity of the number of odd-length cycles, regardless of their individual
lengths. Consequently, the winner can be determined directly from the list
of cycle lengths, without exploring the game tree. This gives an explicit
classification on a family of graphs with cycles. The result concerns the
specified common-root initialization; it does not assert a classification
for arbitrary token positions or for games in which the initial edges are
chosen during play.

\end{document}